\theoremstyle{plain}
\newtheorem{theorem}{Theorem}
\numberwithin{theorem}{section}
\newtheorem*{corollary*}{Corollary}
\newtheorem*{Example*}{Example}
\theoremstyle{definition}
\newtheorem{definition}{Definition}
\numberwithin{definition}{section}
\newtheorem*{def*}{Definition}
\newtheorem*{theorem*}{Theorem}
\newtheorem*{definition*}{Definition}
\theoremstyle{remark}
\newcommand{\bracket}[1]{\left( #1 \right)}
\newcommand{\floor}[1]{\left\lfloor #1 \right\rfloor}
\newcommand{\modulo}[3]{#1\equiv#2\ \bracket{\mathrm{mod}\ #3}}
\newcommand{\calB}{\mathcal{B}}
\newcommand{\calD}{\mathcal{D}}
\newcommand{\calP}{\mathcal{P}}
\newcommand{\calQ}{\mathcal{Q}}
\newcommand{\calS}{\mathcal{S}}
\newcommand{\bbZ}{\mathbb{Z}}
\newcommand{\bigdivide}[1]{\left.#1\right/}
\numberwithin{equation}{section}
\title{\textbf{There are infinitely many ($\MakeLowercase{\textbf{-1}}$,1)-Carmichael numbers}}
\author{QI-YANG ZHENG}
\date{} % Don't show current time
\address{Department of Mathematics, Sun Yat-sen University(Zhuhai Campus), Zhuhai}
\email{zhengqy29@mail2.sysu.edu.cn}
\begin{document}
\maketitle

\begin{abstract}
We prove that there exist infinitely many $(-1,1)$–Carmichael numbers, that is, squarefree, composite integers $n$ such that $p+1\,|\,n-1$ for each prime $p$ dividing $n$.
\end{abstract}

% \tableofcontents

\section{Introduction}

The well-known Fermat's Little Theorem states that if $p$ is a prime, $p$ divides $a^p-a$ for each integers $a$. Carmichael numbers are composite numbers which have this property, i.e. a positive composite integer $n$ is a Carmichael number if $n$ divides $a^n-a$ for each integers $a$. In 1899, Korselt \cite{korselt1899probleme} give a criterion for Carmichael numbers: $n$ is a Carmichael number if and only if $n$ is squareefree and $p-1\,|\,n-1$ for each prime $p$ dividing $n$.

Now we define a series of numbers satisfying some conditions analogous to Korselt's criterion.

\begin{definition}
\textit{A positive squarefree composite number $n$ is an $(a,b)$-Carmichael numbers if $n\neq b$ and $p-a\,|\,n-b$ for each prime $p$ dividing $n$.}
\end{definition}

The condition of squarefree eliminates some trivial cases. For example, cube of prime numbers are $(-1,-1)$-Carmichael numbers, since $p^3+1=(p+1)(p^2-p+1)$.

Under the definition, $(1,1)$-Carmichael numbers are the usual Carmichael numbers; $(-1,-1)$-Carmichael numbers are known as Lucas-Carmichael numbers; $(k,k)$-Carmichael numbers are known as $k$-Korselt numbers.

These numbers play an important role in various primality tests. Thus we are interested in the properties of these numbers. Alford, Granville and Pomerance \cite{alford1994there} proved that there are infinitely many Carmichael numbers and stated that

~

\textit{One can modify our proof to show that for any fixed non-zero integer $a$, there are infinitely many squarefree, composite integers $n$ such that $p-a$ divides $n-1$ for all primes $p$ dividing $n$.}

~

In our notation it is equivalent to prove that there are infinitely many $(a,1)$-Carmichael numbers with $a$ is non-zero. However, there are no any result about infinitude of $(a,1)$-Carmichael numbers but $a=1$. In 2018, Wright \cite{wright2018there} proved that there are infinitely many $(-1,-1)$-Carmichael numbers, or Lucas-Carmichael numbers.

In this paper, we prove that there are infinitely many $(-1,1)$-Carmichael numbers, that is, squarefree, composite integers $n$ such that $p-a\,|\,n-1$ for each prime $p$ dividing $n$. The smallest one of $(-1,1)$-Carmichael numbers is $385=5\times7\times11$. One can see more examples in OEIS/A225711 \cite{sloane2007line}. The method we used is a modification of \cite{alford1994there}.

\begin{definition}
\textit{If $a$, $b$ are integers, we define $C(x;a,b)$ as the numbers of $(a,b)$-Carmichael numbers not exceeding $x$.}
\end{definition}

We will use $C(x)$ in the place of $C(x;-1,1)$ in this paper for convenience. In particular, we prove that $C(x)>x^{0.29}$ for all sufficiently large values of $x$. We believe that there is some gap between $x^{0.29}$ and the true size of $C(x)$, but it is enough to prove the infinitude of $(-1,1)$-Carmichael numbers.

Let $\pi(x)$ be the numbers of primes $p\leq x$. For a fixed non-zero integer $a$, let $\pi_a(x,y)$ be the number of primes $a<p\leq x$ for which $p-a$ is free of prime factors exceeding $y$. From \cite{friedlander1989shifted} we have

\begin{equation}
    \label{shifted primes without large factors}
    \pi_a(x,x^{1-E})\geq\gamma(E)\frac{x}{\log x}
\end{equation}

~

\noindent
for any $x\geq x_1$ and all non-zero integer $a$; $E\in(0,1-(2\sqrt e)^{-1})$; $\gamma(E)$ is a constant depends on $E$; $x_1$ depends on $a$ and $E$.

Define $\pi(x;d,a)$ to be the number of primes up to $x$ that belong to the arithmetic progression $a$ mod $d$. Let $\calB$ denote the set of numbers $B$ in the range $0<B<1$ for which there is a number $x_2(B)$ and a positive integer $D_B$ such that if $x\geq x_2(B)$, $(a,d)=1$ and $1\leq d\leq\min\{x^B,y/x^{1-B}\}$ then

\begin{equation}
    \label{primes in arithmetic progression}
    \pi(y;d,a)\geq\frac{\pi(y)}{2\varphi(d)}
\end{equation}

~

\noindent
whenever $d$ is not divisible by any member of $\calD_B(x)$, a set of at most $D_B$ integers, each of which exceeds $\log x$. We have $(0,5/12)\subset\calB$ (see \cite{alford1994there}). The main theorem of this paper depends intimately on the set $\calB$.

\begin{theorem}
\label{main theorem}
For each $B\in\calB$ and $E\in(0,1-(2\sqrt e)^{-1})$, there is a number $x_0=x_0(E,B)$ such that $C(x)\geq x^{EB}$ for all $x\geq x_0$.
\end{theorem}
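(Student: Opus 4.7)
The plan is to adapt the Alford--Granville--Pomerance (AGP) argument by substituting the divisibility condition $p+1\mid n-1$ for $p-1\mid n-1$ throughout. Fix $B\in\calB$ and $E$ in the allowed range, and introduce auxiliary parameters $\theta\in(0,B)$ (to be optimized, with $\theta$ proportional to $EB$), $X=x^\theta$, and $y=X^{1-E}$. The overall architecture will be: (a) produce a large supply of primes $p\le X$ with $p+1$ being $y$-smooth; (b) construct a highly composite modulus $L\le x^B$ that captures the divisibility $p+1\mid L$ for many such $p$; (c) run a subset-product argument in $(\bbZ/L\bbZ)^*$ to manufacture many squarefree composites $n\equiv 1\pmod L$ whose prime factors lie in the set from (a).

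For (a), I will apply \eqref{shifted primes without large factors} with $a=-1$ and $x$ replaced by $X$ to obtain a set $\calP$ of at least $\gamma(E)X/\log X$ primes. For (b), I would follow the AGP construction, in which $L$ is assembled from carefully chosen small prime powers, so that a positive-density subset $\calQ\subset\calP$ has $p+1\mid L$ while $L\le x^B$. I must also ensure $L$ avoids the at most $D_B$ exceptional divisors in $\calD_B(x)$; this is straightforward because each such divisor exceeds $\log x$ and there are only boundedly many of them, so we can simply exclude those prime powers from the construction of $L$.

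For (c), \eqref{primes in arithmetic progression} applied to $L$ yields $\pi(X;L,a)\ge\pi(X)/(2\varphi(L))$ for every $a$ coprime to $L$. A fundamental-lemma sieve then extracts from $\calQ$ an equidistributed subset across coprime residue classes mod $L$. Treating $\calQ$ as a multiset in $G=(\bbZ/L\bbZ)^*$, the AGP zero-sum-subset lemma in finite abelian groups yields many subsets $S\subset\calQ$ of size $k=\lfloor 1/\theta\rfloor$ with $\prod_{p\in S}p\equiv 1\pmod L$. Each such product $n=\prod_{p\in S}p$ is squarefree, at most $X^k\le x$, composite (as $k\ge 2$), and satisfies $p+1\mid L\mid n-1$ for every prime $p\mid n$, so $n$ is a $(-1,1)$-Carmichael number. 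Distinct subsets yield distinct $n$'s by unique factorization, and counting these subsets gives $C(x)\ge x^{EB}$ after optimizing $\theta$.

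The main obstacle will be step (b): constructing a smooth $L\le x^B$ divisible by $p+1$ for many $p\in\calP$. The key observation that makes the adaptation go through is that \eqref{shifted primes without large factors} is uniform in the shift $a$, so the AGP construction transfers to $a=-1$ with only cosmetic sign changes. The combinatorial step (c) is insensitive to the sign flip and carries over verbatim, as does the extraction of equidistribution from \eqref{primes in arithmetic progression}.
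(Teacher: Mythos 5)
Your architecture does not match the one that actually works, and the central step (b) fails for a counting reason. You ask for a modulus $L\le x^B$ such that a positive proportion of the $\gg X/\log X$ primes $p\le X$ (with $X=x^\theta$, $\theta>0$ fixed) satisfy $p+1\mid L$. But the number of primes $p$ with $p+1\mid L$ is at most the number of divisors of $L$, which for $L\le x^B$ is $x^{o(1)}$ --- far too few. Consequently your step (c), which forms products of a \emph{bounded} number $k=\lfloor 1/\theta\rfloor$ of such primes, can yield at most $x^{o(1)}$ integers $n$, nowhere near $x^{EB}$. The actual argument (both in Alford--Granville--Pomerance and in this paper) inverts the roles: $L$ is taken \emph{enormous}, with $\log L\gg\log x$, namely the product of all primes $q\in(y^\theta/\log y,\,y^\theta]$ with $q-1$ being $y$-smooth; then a Prachar-type theorem (Theorem \ref{Prachar}, the analogue of AGP's Theorem 3.1, proved here with shift $a=-1$) produces a single $k$ such that $dk-1$ is prime for roughly $x^{EB}$ of the divisors $d\le x^B$ of $L$. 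The resulting primes satisfy $p+1=dk\mid kL$, not $p+1\mid L$, so the Korselt-type condition requires $n\equiv 1\pmod{kL}$, and the subsequence-product argument runs in a group attached to $L$ while the congruence mod $k$ must be handled separately. Relatedly, you apply \eqref{shifted primes without large factors} with $a=-1$ to make $p+1$ smooth for the final primes; in fact the smoothness input is needed with shift $+1$, for the primes $q$ building $L$, because its sole purpose is to bound $\lambda(L)$ and hence $n\bigl((\bbZ/L\bbZ)^*\bigr)$ via Theorem \ref{n(G)}.

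Your closing claim that step (c) ``carries over verbatim'' with ``only cosmetic sign changes'' also misses the one genuinely new ingredient of this paper. Since each constructed prime satisfies $p\equiv-1\pmod k$, a product of $|\calS|$ of them is $(-1)^{|\calS|}\pmod k$, so the congruence $n\equiv1\pmod k$ fails whenever $|\calS|$ is odd. The paper repairs this by running the zero-sum argument in $G=(\bbZ/L\bbZ)^*\times(\bbZ/2\bbZ)^+$, mapping each prime to $(\overline{p},-1)$, which forces every identity-product subsequence to have even length; this costs only a factor $2$ in $|G|$ and leaves the bound on $n(G)$ essentially unchanged. Without this (or an equivalent parity device), the construction produces integers $n$ with $n\equiv-1\pmod k$ for half the subsequences, and those are not $(-1,1)$-Carmichael numbers.
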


Since $(0,5/12)\subset\calB$, we have $C(x)>x^{\beta-\varepsilon}$ for any $\varepsilon>0$ and all sufficiently large $x$, where

\begin{equation}
    \notag
    \beta=\bracket{1-\frac{1}{2\sqrt e}}\times\frac{5}{12}=0.290306\cdots
\end{equation}

~

\noindent
As mentioned above, we have $C(x)>x^{0.29}$ for all sufficiently large values of $x$.

~

\section{Subsequence products representing the identity in a group}

For a finite group $G$, let $n(G)$ denote the length of the longest sequence of (not necessarily distinct) elements of $G$ for which no non-empty subsequence has product the identity. Baker and Schmidt \cite{baker1980diophantine} gave an upper bound for $n(G)$ for arbitrary finite abelian groups, which is

\begin{theorem}
\label{n(G)}
If $G$ is a finite abelian group and $m$ is the maximal order of an element in $G$, then $n(G)<m(1+\log(|G|/m))$.
\end{theorem}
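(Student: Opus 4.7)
The plan is to adapt the character-sum / generating-function method originating with Kruyswijk and van Emde Boas. Writing $G$ additively, I would fix a sequence $g_1, g_2, \ldots, g_k \in G$ in which no non-empty subsequence sums to $0$, and work to conclude that $k < m(1 + \log(|G|/m))$.

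First, I would form the character-theoretic generating function. Let $\widehat G$ be the dual group. For $\chi \in \widehat G$ set
$$F(\chi) \;=\; \prod_{i=1}^{k} \bigl(1 + \chi(g_i)\bigr) \;=\; \sum_{I \subseteq \{1,\dots,k\}} \chi\!\Bigl(\sum_{i\in I} g_i\Bigr).$$
Summing over all $\chi$ and applying orthogonality of characters,
$$\sum_{\chi \in \widehat G} F(\chi) \;=\; |G|\cdot \#\Bigl\{I \subseteq \{1,\dots,k\}: \sum_{i\in I} g_i = 0\Bigr\} \;=\; |G|,$$
since the zero-sum-free hypothesis forces only $I=\emptyset$ to contribute. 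Isolating the trivial-character contribution $F(\mathbf 1) = 2^k$ rearranges this to
$$\bigl|\,2^k - |G|\,\bigr| \;\leq\; \sum_{\chi \ne \mathbf 1} |F(\chi)|.$$

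Next I would estimate $|F(\chi)|$ for non-trivial $\chi$, using the fact that in a finite abelian group the exponent equals the maximum element order $m$. Hence every $\chi(g_i)$ is an $m$-th root of unity, so $|1+\chi(g_i)| = 2|\cos(\pi a_i/m)| \leq 2$, with equality if and only if $g_i \in \ker \chi$. The key quantitative input is a Jensen-type estimate together with an averaging over $\chi \ne \mathbf 1$, exploiting that for each fixed $g_i \ne 0$ the fraction of characters with $\chi(g_i)=1$ equals $1/\operatorname{ord}(g_i)$. After some bookkeeping, this should produce an estimate of the shape
$$\sum_{\chi \ne \mathbf 1} |F(\chi)| \;\leq\; (|G|-1)\cdot 2^{\,k(1-1/m)},$$
and combining with the previous display and taking logarithms rearranges to $k < m(1 + \log(|G|/m))$.

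The hardest part will be this final character estimate: extracting the exact constant $m(1+\log(|G|/m))$ rather than a weaker variant requires delicate tracking of how frequently $\chi(g_i)=1$ occurs across pairs $(\chi,g_i)$, essentially a weighted AM--GM applied to the values $|1+\chi(g_i)|$ over the dual group, combined with the elementary identity $\prod_{\omega^d=1}(1+\omega) = 1-(-1)^d$ to control geometric means of character contributions. A plausible fallback, should the character bookkeeping prove stubborn, is an inductive strategy in which one fixes a cyclic subgroup $C \leq G$ of order $m$ and iteratively extracts short subsequences whose sum lies in $C$, then applies the base case $D(C) = m$ to the resulting $C$-valued list; however, the naive form of this iteration yields only $D(G) \leq m \cdot D(G/C)$, which is too weak (it grows exponentially in the number of cyclic invariants), so the induction must be sharpened by carefully balancing block lengths against the quota of $m$ blocks needed from $C$.
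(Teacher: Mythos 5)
The paper does not actually prove this theorem --- it is quoted from Baker and Schmidt \cite{baker1980diophantine} (it is the classical van Emde Boas--Kruyswijk bound for the Davenport constant), so the only question is whether your sketch stands on its own. It does not. The entire weight of the argument rests on the display $\sum_{\chi\ne\mathbf 1}|F(\chi)|\le(|G|-1)\,2^{k(1-1/m)}$, which you describe only as ``an estimate of the shape'' to be produced ``after some bookkeeping.'' The fact that $\chi(g_i)=1$ for a $1/\operatorname{ord}(g_i)$ fraction of characters does not bound the average over $\chi$ of the \emph{product} $\prod_i|1+\chi(g_i)|$: the events $g_i\in\ker\chi$ are correlated across $i$ (for instance when several $g_i$ lie in a common proper subgroup), and AM--GM runs in the wrong direction for pulling an average inside a product. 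That is precisely the hard part, and it is missing.

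More decisively, even if you grant that estimate, it does not imply the theorem. From $2^k\le|G|+(|G|-1)2^{k(1-1/m)}$ one gets $2^{k/m}\le 2|G|$ up to lower-order terms, i.e.\ $k\le m(1+\log_2(2|G|))$, which is strictly weaker than $m(1+\log(|G|/m))$ (wrong base of logarithm, and $|G|$ in place of $|G|/m$). The bound in the theorem is tight for $G=\bbZ/q\bbZ$ with $q$ prime: it asserts $n(G)<q$, attained by the all-ones sequence of length $q-1$. Testing your chain of inequalities at $k=q$ gives $2^q\le q+(q-1)2^{q-1}$, which is \emph{true}, so no contradiction arises and the method cannot exclude $k=q$. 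The structural obstacle is the factor $2$ in $|1+\chi(g_i)|\le2$; the known proofs avoid the subset generating function $\prod_i(1+\chi(g_i))$ altogether, working instead with kernel indicators of the form $\frac1m\sum_{j=0}^{m-1}\chi(g_i)^j$ or with a purely combinatorial argument on partial products, and your own fallback (induction through a cyclic subgroup) you correctly identify as too weak. Since the present paper only needs the statement as a black box, citing \cite{baker1980diophantine} is the right move; for a self-contained argument, follow the proof reproduced in \cite{alford1994there}.
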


The idea is to construct an integer $L$ for which there are a very large number of primes $p$ such that $p+1$ divides $L$. Suppose that the product of some of these primes, say $C=p_1\cdots p_k$, is congruent to $1$ mod $L$, then $C$ is a $(-1,1)$-Carmichael number by definition. If we view these primes $p$ as elements of the group $G=(\bbZ/L\bbZ)^*$, then the condition becomes that, $C$ equals the identity of $G$. Though different primes $p$ may map to the same element of $G$, we view them as different elements of $G$, say label them with different numbers. The next result allows us to construct many such products \cite{alford1994there}.

\begin{theorem}
\label{many subsequences}
Let $G$ be a finite abelian group and let $r>t>n=n(G)$ be integers. Then any sequence of $r$ elements of $G$ contains at least $\left.\binom{r}{t}\right/\binom{r}{n}$ distinct subsequences of length at most $t$ and at least $t-n$, whose product is the identity.
\end{theorem}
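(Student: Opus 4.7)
The plan is to assign to each $t$-element sub-sequence of the given sequence a distinguished identity-producing sub-subsequence of length between $t-n$ and $t$, and then to bound the fibres of this assignment by a clean binomial estimate. Label the sequence $g_1,\ldots,g_r$, and fix a $t$-subset $T\subseteq\{1,\ldots,r\}$. I will produce an index set $S(T)\subseteq T$ with $t-n\leq|S(T)|\leq t$ whose product $\prod_{j\in S(T)}g_j$ equals the identity of $G$, and then count how many $T$ can map to a given $S$.

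The construction of $S(T)$ is greedy and driven by the definition of $n(G)$. Set $T_0:=T$ and iterate: as long as $|T_i|>n$, the sub-sequence indexed by $T_i$ is longer than $n(G)$, so by definition it contains a non-empty sub-subsequence whose product is the identity; pick such an index set $U_{i+1}\subseteq T_i$ and put $T_{i+1}:=T_i\setminus U_{i+1}$. The process halts at some step $k$ with $|T_k|\leq n$, at which point I set $S(T):=U_1\cup\cdots\cup U_k=T\setminus T_k$. By construction $\prod_{j\in S(T)}g_j=\prod_{i=1}^{k}\prod_{j\in U_i}g_j$ is the identity, and $t-n\leq|S(T)|\leq t$ follows immediately from $|T_k|\leq n$ and $S(T)\subseteq T$.

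For the count, fix any image $S$ of this map and let $s=|S|\in[t-n,t]$, $k=t-s\in[0,n]$. Every preimage $T$ is a $t$-subset of $\{1,\ldots,r\}$ containing $S$, so the number of preimages is at most $\binom{r-s}{t-s}=\binom{r-t+k}{k}$. The ratio $\binom{r-t+k}{k}/\binom{r-t+k-1}{k-1}=(r-t+k)/k$ exceeds $1$ because $r>t$, so $\binom{r-t+k}{k}$ is strictly increasing in $k$ on $[0,n]$ and is therefore bounded above by $\binom{r-t+n}{n}\leq\binom{r}{n}$. Dividing the $\binom{r}{t}$ choices of $T$ by this uniform fibre bound yields at least $\binom{r}{t}/\binom{r}{n}$ distinct image sets $S$, each an index set of an identity-producing subsequence of length in $[t-n,t]$. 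The main obstacle is really just isolating the correct fibre estimate; the iterative use of $n(G)$ is mechanical, and no deep structural property of $G$ is needed beyond its defining property.
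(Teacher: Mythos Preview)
Your argument is correct and is precisely the standard Alford--Granville--Pomerance proof (Proposition~1.2 of \cite{alford1994there}), which the present paper does not reproduce but merely cites. The greedy extraction of identity sub-subsequences from each $t$-subset $T$ until at most $n$ indices remain, followed by the fibre bound $\binom{r-s}{t-s}\le\binom{r-t+n}{n}\le\binom{r}{n}$, is exactly their construction; your monotonicity check in $k=t-s$ is the right way to justify the last step, and the only implicit detail---fixing a deterministic choice rule so that $T\mapsto S(T)$ is a genuine function---is harmless.
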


Let $R$ be a sequence of $r$ elements of $G$. Though $R$ may contain same elements of $G$, we label these same elements with different numbers and view them as different elements. The step is necessary since the original proof may construct same subsequences, but we view them as different subsequences if their elements has different labeled numbers.

For example, let $G=(\bbZ/5\bbZ)^*$. We view $\{19,29\}$ and $\{19,59\}$ as different subsequences of $G$, although they are $\{4,4\}$ in the reduced residue class mod $5$.

~

\section{Infinitude of (-1,1)-Carmichael numbers}

First we prove a theorem resembling Theorem 3.1 of \cite{alford1994there} and the proof is based on that.

\begin{theorem}
\label{Prachar}
Suppose that $B\in\calB$ and $a$ is a fixed non-zero integer. There exists a number $x_3(B)$ such that if $x\geq x_3(B)$ and $L$ is a squarefree integer not divisible by any prime exceeding $x^{(1-B)/2}$ and for which $\sum_{\mathrm{prime}\ q|L}1/q\leq(1-B)/32$, then there is a positive integer $k\leq x^{1-B}+|a|$ with $(k,aL)=1$, such that

\begin{equation}
    \notag
    \#\{ d|L:dk+a\leq x,\ dk+a\text{ is prime} \}\geq\frac{2^{-D_B-\omega(a)-3}}{\log x}\#\{ d|L:1\leq d\leq x^B \},
\end{equation}

~

\noindent
where $\omega(n)$ is the number of distinct prime factors of $|n|$.
\end{theorem}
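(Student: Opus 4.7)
The plan is to mimic the proof of Theorem~3.1 in Alford--Granville--Pomerance \cite{alford1994there}, extending the treatment from the form $dk+1$ to $dk+a$ for arbitrary nonzero $a$. Set $K:=x^{1-B}+|a|$ and consider the double count
\begin{equation*}
T:=\#\bigl\{(d,k):d\mid L,\ 1\leq d\leq x^{B},\ (d,a)=1,\ 1\leq k\leq K,\ (k,aL)=1,\ dk+a\leq x,\ dk+a\text{ prime}\bigr\}.
\end{equation*}
Since the number of $k\in[1,K]$ coprime to $aL$ is at most $K$, an averaging argument will produce a single $k\leq K$ with $(k,aL)=1$ whose fibre $N_{k}=\#\{d\mid L:dk+a\leq x,\ dk+a\text{ prime}\}$ satisfies $N_{k}\geq T/K$. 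Hence the task reduces to showing
\begin{equation*}
T\geq 2^{-D_{B}-\omega(a)-3}\cdot\frac{K}{\log x}\cdot\#\{d\mid L:1\leq d\leq x^{B}\}.
\end{equation*}

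To bound $T$ from below I would sum over $d$ first. Call $d\mid L$ \emph{good} if $d\leq x^{B}$, $(d,a)=1$, and no element of $\calD_{B}(x)$ divides $d$. For each good $d$, set $y:=dK+a$ (so $y\leq x$); since $d\leq x^{B}$ and $d\leq y/x^{1-B}$, the hypothesis $B\in\calB$ yields
\begin{equation*}
\pi(y;d,a)\geq\frac{\pi(y)}{2\varphi(d)}.
\end{equation*}
Each such prime $p$ produces a pair $(d,k)$ with $k=(p-a)/d\in[1,K]$, and it remains to enforce $(k,aL)=1$. A standard upper-bound sieve on the primes $q\mid L$ with $q\nmid d$ costs at most $\sum_{q\mid L}1/(q-1)\leq 2\sum_{q\mid L}1/q\leq(1-B)/16$ in density, while coprimality to the primes $q\mid a$ (with $q\nmid d$) removes at most $\omega(a)$ primes in total (namely those with $p=q$), which is negligible for $x$ large. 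Thus at least half of the primes in the progression survive, and combining with $\pi(y)\geq y/(2\log y)$ and $d/\varphi(d)\geq 1$ shows each good $d$ contributes at least $K/(8\log x)$ to $T$.

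To pass from \emph{good} divisors to all divisors $d\mid L$ with $d\leq x^{B}$, I would use the following combinatorial fact: for any integer $m>1$, the squarefreeness of $L$ implies that the map $d\mapsto d/m$ injects $\{d\mid L:d\leq x^{B},\ m\mid d\}$ into $\{d\mid L:d\leq x^{B},\ m\nmid d\}$ (well-definedness uses $d\mid L\Rightarrow d$ squarefree $\Rightarrow m^{2}\nmid d$), so removing divisors divisible by $m$ costs at most a factor of $2$. Applying this injection iteratively to the at most $\omega(a)$ primes dividing $a$ and to the at most $D_{B}$ moduli in $\calD_{B}(x)$, one obtains
\begin{equation*}
\#\{d\text{ good}\}\geq 2^{-D_{B}-\omega(a)}\#\{d\mid L:1\leq d\leq x^{B}\}.
\end{equation*}
Multiplying by the per-$d$ contribution $K/(8\log x)=2^{-3}K/\log x$ and dividing by $K$ yields the claimed bound, the exponent $-3$ accounting for the three factors of $\tfrac{1}{2}$ coming from \eqref{primes in arithmetic progression}, the sieve survival rate, and the inequality $\pi(y)\geq y/(2\log y)$.

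The main technical obstacle lies in the uniform sieve for $(k,aL)=1$: $L$ may carry a large number of prime factors, and one must control the cumulative sieve loss over all of them. This is precisely what the smallness hypothesis $\sum_{q\mid L}1/q\leq(1-B)/32$ is designed to achieve, forcing the surviving density to stay above $1/2$ uniformly in the good divisor $d$. A subsidiary issue is ensuring that the range condition of \eqref{primes in arithmetic progression} is met for $y=dK+a$ across all good $d$, which is handled by taking $x_{3}(B)$ large enough (depending on $a$ through its finite contribution to the sieve and the implied constants).
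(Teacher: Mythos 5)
Your proposal follows essentially the same route as the paper: a double count of pairs $(d,k)$, a per-divisor lower bound from \eqref{primes in arithmetic progression}, an upper-bound sieve (the paper uses Montgomery--Vaughan's Brun--Titchmarsh explicitly) to enforce $((p-a)/d,L)=1$ via the hypothesis $\sum_{q\mid L}1/q\leq(1-B)/32$, a combinatorial loss of $2^{-D_B-\omega(a)}$ to pass to admissible divisors, and a pigeonhole over $k\leq x^{1-B}+|a|$. The only structural difference is cosmetic: the paper implements the reduction to admissible divisors by forming an auxiliary modulus $L'$ (dividing out one prime for each offending member of $\calD_B(x)$ and the primes of $(a,L)$) and comparing divisor counts of $L'$ and $L$, whereas you use a direct injection $d\mapsto d/m$; the two are equivalent and your injection argument is sound because $L$ is squarefree.

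One concrete slip to fix: with $y:=dK+a=dx^{1-B}+d|a|+a$, your parenthetical claim that $y\leq x$ is false once $d$ is close to $x^{B}$, so the primes you count in the interval $(x,\,dK+a]$ produce pairs $(d,k)$ that violate the required condition $dk+a\leq x$. The length of this overflow interval is of order $|a|x^{B}$, which is not automatically negligible against the main term $x^{1-B}/\log x$ for a general $B\in\calB$ (the paper only guarantees $(0,5/12)\subset\calB$, but the theorem is stated for every $B\in\calB$). The paper sidesteps this entirely by taking $y=dx^{1-B}$, which still satisfies the range condition $d\leq y/x^{1-B}$ for \eqref{primes in arithmetic progression} and forces $p\leq dx^{1-B}\leq x$ and $k=(p-a)/d\leq x^{1-B}+|a|$ simultaneously; you should do the same. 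With that change, your constant-chasing works out: the factor $2/(1-B)$ that Brun--Titchmarsh incurs (because the modulus $dq$ can be as large as $x^{(1+B)/2}$, shrinking $\log(x^{1-B}/q)$) is exactly what the hypothesis $\sum_{q\mid L}1/q\leq(1-B)/32$ is calibrated to absorb, leaving at least half of the main term, and the finitely many primes $p\mid a$ excluded by the condition $(k,a)=1$ are harmless.
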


\begin{proof}
Let $x_3(B)=\max\{x_2(B),17^{(1-B)^{-1}},|a|^{(1-B)^{-1}}\}$. For each $d\in\calD_B(x)$ which divides $L$, we divide some prime factor of $d$ out from $L$. Furthermore, we divide all prime factors of $(a,L)$ out from $L$, so as to obtain a number $L'$ which is not divisible by any number in $\calD_B(x)$ and $(a,L')=1$. Thus $\omega(L')\geq\omega(L)-D_B-\omega(a)$, and

\begin{equation}
    \label{L' and L}
    \#\{ d|L':1\leq d\leq y \}\geq2^{-D_B-\omega(a)}\#\{ d|L:1\leq d\leq y \}
\end{equation}

~

\noindent
for any $y\geq1$. To see this, think of a divisor $d'$ of $L'$ as corresponding to a divisor $d$ of $L$ if and only if $d'$ divides $d$ and $d/d'$ divides $L/L'$. So if $d\leq y$ then the corresponding $d'$ is at most $y$. Moreover, for any divisor $d'$ of $L'$, the number of divisors $d$ of $L$ which correspond to $d'$ is at most the number of divisors of $L/L'$, which is at most $2^{D_B+\omega(a)}$.

From \eqref{primes in arithmetic progression} we see that, for each divisor $d$ of $L'$ with $1\leq d\leq x^B$, we have

\begin{equation}
    \label{lower bound}
    \pi(dx^{1-B};d,a)\geq\frac{\pi(dx^{1-B})}{2\varphi(d)}\geq\frac{dx^{1-B}}{2\varphi(d)\log(dx^{1-B})}\geq\frac{dx^{1-B}}{2\varphi(d)\log x},
\end{equation}

~

\noindent
since $\pi(y)\geq y/\log y$ for all $y\geq17$ (see \cite{rosser1962approximate}). Furthermore, since any prime factor $q$ of $L$ is at most $x^{(1-B)/2}$, we can use Montgomery and Vaughan's explicit version of the Brun-Titchmarsh theorem \cite{montgomery1973large}, to get

\begin{equation}
    \notag
    \pi(dx^{1-B};dq,a)\leq\frac{2dx^{1-B}}{\varphi(dq)\log(x^{1-B}/q)}\leq\frac{4}{\varphi(q)(1-B)}\frac{dx^{1-B}}{\varphi(d)\log x}\leq\frac{8}{q(1-B)}\frac{dx^{1-B}}{\varphi(d)\log x}.
\end{equation}

~

Therefore, by \eqref{lower bound}, the number of primes $p\leq dx^{1-B}$ with $\modulo{p}{a}{d}$ and $((p-a)/d,L)=1$ is at least

\begin{equation}
    \notag
    \begin{aligned}
    &\ \ \ \ \pi(dx^{1-B};d,a)-\sum_{\text{prime }q|L}\pi(dx^{1-B};dq,a)\\
    &\geq\bracket{\frac{1}{2}-\frac{8}{1-B}\sum_{\text{prime }q|L}\frac{1}{q}}\frac{dx^{1-B}}{\varphi(d)\log x}\\
    &\geq\frac{x^{1-B}}{4\log x}.
    \end{aligned}
\end{equation}

~

\noindent
Thus we have at least

\begin{equation}
    \notag
    \frac{x^{1-B}}{4\log x}\#\{d|L':1\leq d\leq x^B\}
\end{equation}

~

\noindent
pairs $(p,d)$ where $p\leq dx^{1-B}$ is a prime, $\modulo{p}{a}{d}$, $((p-a)/d,L)=1$, $d|L'$ and $1\leq d\leq x^B$. Each such pair $(p,d)$ corresponds to an integer $(p-a)/d$ that is coprime to $L$ and

\begin{equation}
    \notag
    \frac{p-a}{d}\leq x^{1-B}+\frac{|a|}{d}\leq x^{1-B}+|a|.
\end{equation}

~

\noindent
Since $x^{1-B}\geq|a|$, there is at least one integer $k\leq x^{1-B}+|a|$ with $(k,L)=1$ such that $k$ has at least

\begin{equation}
    \notag
    \frac{x^{1-B}}{x^{1-B}+|a|}\frac{1}{4\log x}\#\{d|L':1\leq d\leq x^B\}\geq\frac{1}{8\log x}\#\{d|L':1\leq d\leq x^B\}
\end{equation}

~

\noindent
representations as $(p-a)/d$ with $(p,d)$ as above. Moreover, we have $(a,d)=1$ since $(a,L')=1$, so

\begin{equation}
    \notag
    \bracket{\frac{p-a}{d},a}=(p-a,a)=(p,a).
\end{equation}

~

\noindent
If $(p,a)=p$, then $a=p$ since $p\geq a$, which is a contradiction to the condition $((p-a)/d,L)=1$. Thus for this integer $k$ we have $(k,a)=1$ and

\begin{equation}
    \notag
    \begin{aligned}
    &\ \ \ \ \#\{ d|L:dk+a\leq x,\ dk+a\text{ is prime} \}\\
    &\geq\frac{1}{8\log x}\#\{d|L':1\leq d\leq x^B\}\\
    &\geq\frac{2^{-D_B-\omega(a)-3}}{\log x}\#\{ d|L:1\leq d\leq x^B \},
    \end{aligned}
\end{equation}

~

\noindent
where we use \eqref{L' and L} in the last inequality.

\end{proof}

\noindent
Now we recalling the main theorem of this paper.

~

\noindent
\textbf{Theorem 1.1.} \textit{For each $B\in\calB$ and $E\in(0,1-(2\sqrt e)^{-1})$, there is a number $x_0=x_0(E,B)$ such that $C(x)\geq x^{EB}$ for all $x\geq x_0$.}

\begin{proof}

Let $0<\varepsilon<EB$ be a fixed number, $\theta=(1-E)^{-1}$ and let $y\geq2$ be a parameter. Denote by $\calQ$ the set of primes $q\in(y^\theta/\log y,y^\theta]$ for which $q-1$ is free of prime factors exceeding $y$. By \eqref{shifted primes without large factors},

\begin{equation}
    \label{calQ}
    \begin{aligned}
    |\calQ|&=\pi_1(y^\theta,y)-\pi_1(y^\theta/\log y,y)\\
    &\geq\gamma(E)\frac{y^\theta}{\log y^\theta}-\frac{2y^\theta}{(\log y)\log(y^\theta/\log y)}\\
    &\geq\frac{\gamma(E)}{2}\frac{y^\theta}{\log y^\theta}
    \end{aligned}
\end{equation}

~

\noindent
for all sufficiently large $y$. Let $L$ be the product of the primes $q\in\calQ$, then

\begin{equation}
    \label{log(L)}
    \log L\leq|\calQ|\log y^\theta\leq\pi(y^\theta)\log y^\theta\leq2y^\theta
\end{equation}

~

\noindent
for all sufficiently large $y$. Let $\lambda(n)$ denote the Carmichael lambda function, the largest order of an element in $(\bbZ/n\bbZ)^*$. We have $\lambda(L)$ is the least common multiple of the numbers $q-1$ for those primes $q$ that divide $L$. Since each such $q-1$ is free of prime factors exceeding $y$, we know that if the prime power $p^s$ divides $\lambda(L)$ then $p\leq y$ and $p^s\leq y^\theta$. Thus if we let $p^{s_p}$ be the largest power of $p$ with $p^{s_p}\leq y^\theta$, then

\begin{equation}
    \label{lambda(L)}
    \lambda(L)\leq\prod_{p\leq y}p^{a_p}\leq\prod_{p\leq y}y^\theta=y^{\theta\pi(y)}\leq e^{2\theta y}
\end{equation}

~

\noindent
for all sufficiently large $y$. Let $\delta=\varepsilon\theta/(4B)$ and let $x=e^{y^{1+\delta}}$. By Theorem 5 of \cite{rosser1962approximate} we have

\begin{equation}
    \notag
    \begin{aligned}
    \sum_{\text{prime }q|L}\frac{1}{q}&\leq\sum_{y^\theta/\log y<q\leq y^\theta}\frac{1}{q}\\
    &\leq\log\log y^\theta+\frac{1}{2(\log y^\theta)^2}-\log\log(y^\theta/\log y)+\frac{1}{2(\log(y^\theta/\log y))^2}\\
    &=\log\frac{\theta\log y}{\theta\log y-\log\log y}+\frac{1}{2(\log y^\theta)^2}+\frac{1}{2(\log(y^\theta/\log y))^2}\\
    &\leq\frac{1-B}{32}
    \end{aligned}
\end{equation}

~

\noindent
for all sufficiently large $y$. Then we can apply Theorem 3.1 with $B$, $x$, $L$. Thus for all sufficiently large $y$, there is an integer $k$ coprime to $L$, for which the set $\calP$ of primes $p\leq x$ with $p=dk-1$ for some divisor $d$ of $L$, satisfies

\begin{equation}
    \label{calP}
    |\calP|\geq\frac{2^{-D_B-3}}{\log x}\#\{d|L:1\leq d\leq x^B\}.
\end{equation}

~

\noindent
Let $G=(\bbZ/L\bbZ)^*\times (\bbZ/2\bbZ)^+$. Since $\lambda(n)$ is even for all $n\in\bbZ_{\geq3}$, we conclude from Theorem \ref{n(G)}, \eqref{log(L)} and \eqref{lambda(L)} that

\begin{equation}
    \label{n(G) upper bound}
    n(G)<\lambda(L)\bracket{1+\log\frac{2\varphi(L)}{\lambda(L)}}\leq\lambda(L)(1+\log2+\log L)\leq e^{3\theta y}
\end{equation}

~

\noindent
for all sufficiently large $y$. The product of any

\begin{equation}
    \notag
    u:=\floor{\frac{\log x^B}{\log y^\theta}}=\floor{\frac{B\log x}{\theta\log y}}
\end{equation}

~

\noindent
distinct prime factors of $L$, is a divisor $d$ of $L$ with $d\leq x^B$. We deduce from \eqref{calQ} that

\begin{equation}
    \notag
    \#\{d|L:1\leq d\leq x^B\}\geq\binom{\omega(L)}{u}\geq\bracket{\frac{\omega(L)}{u}}^u\geq\bracket{\frac{\gamma(E)y^\theta}{2B\log x}}^u=\bracket{\frac{\gamma(E)}{2B}y^{\theta-1-\delta}}^u.
\end{equation}

~

\noindent
Thus, by $\eqref{calP}$ and the identity $(\theta-1-\delta)B/\theta=EB-\varepsilon/4$, we have

\begin{equation}
    \label{calP2}
    |\calP|\geq\frac{2^{-D_B-3}}{\log x}\bracket{\frac{\gamma(E)}{2B}y^{\theta-1-\delta}}^{\floor{\frac{B\log x}{\theta\log y}}}
\end{equation}

~

\noindent
for all sufficiently large $y$. Now take $\calP'=\calP\backslash\calQ$. Since $|\calQ|\leq y^\theta$, we have by \eqref{calP2} that

\begin{equation}
    \label{calP'}
    |\calP'|\geq x^{EB-\varepsilon/2}
\end{equation}

~

\noindent
for all sufficiently large $y$.

For each element of $p\in\calP'$, we view it as the element $(\overline{p},-1)\in G$, where $\overline{p}$ denotes the residue of $p$ in $(\bbZ/L\bbZ)^*$. As mentioned above, if $\overline{p_i}=\overline{p_j}$ but $p_i\neq p_j$, then we view $(\overline{p_i},-1)$ and $(\overline{p_j},-1)$ as two different elements of $G$. If $\calS$ is a subsequence of $G$ that contains more than one element and if

\begin{equation}
    \notag
    \Pi(\calS):=\prod_{g\in\calS}g=1_G,
\end{equation}

~

\noindent
then we can construct a $(-1,1)$-Carmichael number. Firstly, $|\calS|$ must be even, since the second component of all elements are $-1$. Moreover, the product of the first component of all elements is $1$ mod $L$, say

\begin{equation}
    \notag
    \Pi_1(\calS):=\prod_{p}\modulo{p}{1}{L},
\end{equation}

~

\noindent
where the product is over all preimages of first component of elements in $\calS$. Furthermore, each member of $\calP$ is $-1$ mod $k$ so that $\modulo{\Pi_1(\calS)}{1}{k}$, since $|\calS|$ is even. Thus $\modulo{\Pi_1(\calS)}{1}{kL}$ since $(k,L)=1$. If $p\in\calP'$ then $p\in\calP$ so that $p+1=kd\,|\,kL\,|\,\Pi_1(\calS)-1$. Thus $\Pi_1(\calS)$ is a $(-1,1)$-Carmichael number.

Let $t=e^{y^{1+\delta/2}}$. Evidently $t>n(G)$ for all sufficiently large $y$. Then by Theorem \ref{many subsequences}, we see that the number of $(-1,1)$-Carmichael numbers of the form $\Pi_1(\calS)$, where $\floor{t}-n(G)\leq|\calS|\leq\floor{t}$, is at least

\begin{equation}
    \notag
    \bigdivide{\binom{|\calP'|}{\floor{t}}}\binom{|\calP'|}{n(G)}\geq\bigdivide{{\bracket{\frac{|\calP'|}{\floor{t}}}^{\floor{t}}}}|\calP'|^{n(G)}\geq\bracket{x^{EB-\varepsilon/2}}^{\floor{t}-n(G)}\floor{t}^{-\floor{t}}\geq x^{t(EB-\varepsilon)}
\end{equation}

~

\noindent
for all sufficiently large $y$, using $\eqref{n(G) upper bound}$ and $\eqref{calP'}$. Since each $(-1,1)$-Carmichael number we construct satisfy $\Pi_1(\calS)\leq x^t$, we have $C(X)\geq X^{EB-\varepsilon}$ for all sufficiently large $y$, where $X:=x^t$. Moreover, since $y$ and $X$ can be uniquely determined by each other, we derive $C(x)\geq x^{EB-\varepsilon}$ for each $B\in\calB$, $E\in(0,1-(2\sqrt e)^{-1})$, $0<\varepsilon<EB$ and sufficiently large $x$.

Since $E$ is in an open interval, choose $E'\in(0,1-(2\sqrt e)^{-1})$ with $E'>E$ and let $\varepsilon=(E'-E)B$, then we have $C(x)\geq x^{E'B-(E'-E)B}=x^{EB}$ for sufficiently large $x$, this completes the proof.

\end{proof}

Unfortunately, our proof is not applicable to the case $a\neq1$. In fact, we can only solve the case that $a$ have a small order mod $k$ (see Theorem \ref{Prachar}). Since the properties of $k$ is unknown, $k$ may ruin the estimate of some arguments. Anyway, we believe that for non-zero integers $a$, $b$, there are infinitely many $(a,b)$-Carmichael numbers.

\section*{Acknowledgement}

The ideas came to us after seeing the papers \cite{alford1994there} and \cite{wright2018there}. The method we used in this paper is a simple modification of method in \cite{alford1994there}.

~

% \printbibliography

\end{document}